\theoremstyle{plain}
\newtheorem{theorem}{Theorem}
\newtheorem{lemma}{Lemma}
\author{Evgeny~Fominykh}
\author{Andrei~Malyutin}
\author{Ekaterina~Shumakova}
\address{St.~Petersburg University and St.~Petersburg Department of V.~A.~Steklov Mathematical Institute}
\email{fominykh@gmail.com}
\address{St.~Petersburg Department of V.~A.~Steklov Mathematical Institute and St.~Petersburg University}
\email{malyutin@pdmi.ras.ru}
\address{Chelyabinsk State University and St.~Petersburg University}
\email{shumakova\_kate@mail.ru}
\thanks{This work is supported by the Russian Science Foundation under grant 19-11-00151.}
\title{$3$-manifolds represented by $4$-regular graphs with three Eulerian cycles}
\begin{document}

\maketitle

We construct and study a new class of compact hyperbolic $3$-manifolds with totally geodesic boundary. 
This class extends the line of research developed in \cite{FMP03a, VesTurFom16, VesFom12, FomShum21}
and exhibits a number of remarkable properties.
The members of this class are described by triples of Eulerian cycles in $4$-regular graphs.
Two Eulerian cycles are said to be \emph{compatible} if no pair of adjacent edges are consecutive in both cycles.
If a $4$-regular graph $G$ contains 
a triple~$\theta$ of pairwise compatible Eulerian cycles,
we say that $G$ is \emph{$3$-Eulerian} and $\theta$ is a \emph{framing} of~$G$.
Each finite $3$-vertex-connected simple $4$\nobreakdash-regular graph is $3$-Eulerian~\cite{J91}.
Let $G$ be a $3$-Eulerian graph, and let~$\theta$ be a framing of~$G$. 
A~\emph{polyhedral realization} of the pair $(G,\theta)$ is a $2$-dimensional
polyhedron $P(G,\theta)$ obtained from $G$ by attaching a $2$-cell along each cycle in $\theta$.

\begin{lemma}
    $P(G, \theta)$ is a special spine of a $3$-manifold with nonempty boundary.
\end{lemma}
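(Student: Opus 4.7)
The plan is to verify that $P(G,\theta)$ meets the local conditions defining a special polyhedron, and then to invoke the classical reconstruction theorem of Casler, according to which every special polyhedron is a spine of a unique compact $3$-manifold with nonempty boundary. The local analysis is organized by the natural stratification of $P(G,\theta)$: interior points of the three attached $2$-cells, interior points of edges of $G$, and vertices of $G$.

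The first two cases are routine. A point in the interior of an attached disk has an open $2$-disk neighborhood by construction. An interior point of an edge $e$ of $G$ lies on the boundary of each of the three attached disks, since every Eulerian cycle traverses $e$ exactly once; so its neighborhood consists of three half-disks meeting along $e$, which is the standard triple-line model of a special polyhedron.

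The decisive step is at a vertex $v$ of $G$. Since $v$ has degree $4$, each Eulerian cycle passes through $v$ exactly twice, and each passage pairs two of the four incident edges as consecutive; hence every cycle in $\theta$ induces a perfect matching of the four edges at $v$ into two pairs. A $4$-element set admits only three distinct perfect matchings into pairs, and pairwise compatibility of the cycles in $\theta$ forces the three induced matchings to be pairwise distinct. Consequently all three matchings appear at $v$, each of the $\binom{4}{2}=6$ unordered pairs of edges at $v$ occurs as a consecutive pair in exactly one cycle of $\theta$, and the six corresponding half-disks at $v$ have link in $P(G,\theta)$ isomorphic to $K_4$. This is precisely the true-vertex model, and it is the only place in the argument where the compatibility of the framing (as opposed to the bare Eulerian property) is used in an essential way.

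Having identified $P(G,\theta)$ as a special polyhedron, Casler's theorem yields a unique compact $3$-manifold $M$ having $P(G,\theta)$ as a spine. The standard local thickenings---a slab along each $2$-stratum, a trivalent book along each triple line, and a $3$-ball around each true vertex---assemble into $M$ and expose a nontrivial free boundary surface, so $\partial M\neq\emptyset$. The main obstacle is the vertex analysis, since this is the only step requiring the framing hypothesis; the remaining verifications rely only on the Eulerian condition and on the general theory of special spines.
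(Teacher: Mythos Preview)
Your local analysis is correct and is in fact more explicit than the paper's: you carefully verify that $P(G,\theta)$ is a special polyhedron in Matveev's sense, and the vertex argument (three pairwise distinct perfect matchings on four half-edges exhaust the $\binom{4}{2}$ corners, giving a $K_4$ link) is exactly right. The paper takes this part essentially for granted.

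However, there is a genuine gap at the next step. In the framework of \cite[Chapter~1]{Mat07} that the paper uses, being a special polyhedron does \emph{not} by itself guarantee that the polyhedron is the spine of a $3$-manifold. Casler's theorem, as recorded there (Theorem~1.1.17), is a \emph{uniqueness} statement: a special spine determines its manifold. Existence of a thickening is a separate issue, and there are special polyhedra that are not thickenable. The obstruction is exactly the one you glide over when you write that the local thickenings ``assemble into $M$'': following $\partial\xi$ around, the induced normal $I$-bundle (determined by how the other two sheets sit along the triple lines and across the true vertices) may be a M\"obius band rather than an annulus, and then no slab $D^2\times I$ can be glued in over $\xi$.

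This obstruction is precisely what the paper's proof checks. Removing a $2$-cell $\xi$ leaves a closed surface $P\setminus\xi$ (your matching analysis shows its vertex links are $4$-cycles), and the two remaining $2$-cells give it a checkerboard colouring because their boundaries are Eulerian. Hence $\partial\xi$ has one colour on each side all the way around, so it is two-sided and orientation-preserving on $P\setminus\xi$; by the thickenability criterion in \cite[Chapter~1]{Mat07} this is what is needed. Your write-up would be complete if you inserted this argument between ``$P(G,\theta)$ is special'' and ``hence a spine''.
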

\begin{proof}
    As follows from the theory of special polyhedra \cite[chapter 1]{Mat07}, it suffices to verify that the boundary curve of each $2$-component $\xi$ of $P=P(G, \theta)$ is orientation preserving on the closed surface $P\setminus\xi$. The latter follows from the fact that the remaining $2$-components of $P$ give a checkerboard coloring on $P\setminus \xi$, since their boundary curves are Eulerian cycles. This completes the proof.
\end{proof}

    It is known~\cite[Theorem 1.1.17]{Mat07} that each special spine of a $3$-manifold~$M$ with boundary determines~$M$ uniquely. 
    Denote by~$M(G,\theta)$ the manifold (with boundary) determined by $P(G,\theta)$.

\begin{theorem}
    Let $G$ be a $3$-Eulerian graph with $n\ge 4$ vertices, and let~$\theta$ be a framing of~$G$. Then
    \begin{enumerate}
        \item $M(G,\theta)$ is hyperbolic with totally geodesic connected boundary.
        \item Matveev complexity of $M(G, \theta)$ equals $n$.
        \item The topological ideal triangulation $T(G, \theta)$ dual to $P(G, \theta)$ is minimal.
    \end{enumerate}
\end{theorem}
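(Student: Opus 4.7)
My plan is to establish (1)--(3) as a single package: I first build an explicit hyperbolic metric on $M(G,\theta)$ from the dual ideal triangulation $T(G,\theta)$, then use it to pin down the complexity, and finally read off the minimality statement.

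For part (1), I would realise each of the $n$ ideal tetrahedra of $T(G,\theta)$ as an isometric copy of the regular truncated hyperideal tetrahedron $\tau_n\subset\mathbb H^3$ whose six ``internal'' edges all carry dihedral angle $\pi/n$. Such a $\tau_n$ exists for every $n\ge 4$, since $\pi/n\le\pi/4<\pi/3$. The central compatibility check is that the dihedral angles around every edge of $T(G,\theta)$ sum to $2\pi$. The edges of $T(G,\theta)$ correspond to the three $2$-components of $P(G,\theta)$, i.e.\ to the three Eulerian cycles in $\theta$, and each Eulerian cycle visits every vertex of $G$ exactly twice; hence each edge of $T(G,\theta)$ is incident to exactly $2n$ tetrahedra, yielding total angle $2n\cdot\pi/n=2\pi$, as required. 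The truncation hexagons automatically align along the triple edges of $P(G,\theta)$ and assemble into a totally geodesic boundary surface, with dihedral angle $\pi/2$ to each adjacent hexagon by construction. To see that this boundary is connected, I would combine the Euler characteristic computation $\chi(\partial M(G,\theta))=2\chi(P(G,\theta))=6-2n$ with a direct combinatorial analysis showing that the link of ideal vertices of $T(G,\theta)$ is connected, by tracing how truncation triangles from consecutive tetrahedra around each edge of $T(G,\theta)$ get identified.

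For parts (2) and (3) the upper bound $c(M(G,\theta))\le n$ is immediate, since $P(G,\theta)$ is a special spine with exactly $n$ true vertices. For the matching lower bound I would appeal to the volume/complexity machinery developed by Frigerio, Martelli and Petronio in the hyperbolic-with-geodesic-boundary setting: the structure built in part~(1) gives $\operatorname{vol}(M(G,\theta))=n\cdot V(\pi/n)$, where $V(\pi/n)$ is the volume of $\tau_n$, and combined with an extremality bound saying that any almost simple spine of $M(G,\theta)$ yields a decomposition whose geometric pieces each carry volume at most $V(\pi/n)$, this forces $c(M(G,\theta))\ge n$, so equality holds. Since $T(G,\theta)$ already has $n$ tetrahedra, it is automatically a minimal ideal triangulation, which is exactly~(3).

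The main obstacle will be the lower bound in~(2): pinning down the precise extremality statement for the regular truncated tetrahedron at dihedral angle $\pi/n$ and verifying that it applies to \emph{every} almost simple spine of $M(G,\theta)$, not merely to the distinguished one $P(G,\theta)$. A secondary, largely combinatorial, hurdle is establishing the connectedness of $\partial M(G,\theta)$ in part~(1), which reduces to showing that the equivalence relation on corners of truncation triangles generated by the edges of $T(G,\theta)$ has a single class.
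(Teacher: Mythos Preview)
Your treatment of part~(1) coincides with the paper's: both realise the $n$ dual tetrahedra as regular truncated hyperbolic tetrahedra of dihedral angle $\pi/n$ and verify the $2n$ incidences at each of the three edges. For connectedness of $\partial M$ the paper gives a tidier argument than tracing truncation triangles: it forms the graph~$\Gamma$ whose vertices are the components of $\partial M$ and whose edges are the three edges of~$T^*$, and observes that $\Gamma$ is a wedge of three circles, hence has exactly one vertex.

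For parts~(2)--(3) the paper takes an entirely different, purely topological route and never invokes volume. If a special spine $Q$ with $n'<n$ vertices existed, the identity $\chi(Q)=\chi(M)=\chi(P)=3-n$ would force $Q$ to have at most two $2$-components, and hence to contain at most one closed surface. But $P$ contains three closed surfaces (for each $2$-component $\xi$, the subpolyhedron $P\setminus\xi$ is one), and the number of closed surfaces inside a special polyhedron~$S$ equals the number of nonzero elements of $H_2(S;\mathbb{Z}_2)$, which is an invariant of~$M$. This contradiction is elementary and self-contained.

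Your proposed volume route has a genuine gap precisely where you flag the main obstacle, and the gap is not merely a missing reference: the extremality statement you formulate is false as written. The decomposition dual to an arbitrary almost simple spine is only topological; its pieces sit in $M$ as measurable regions with no a~priori bound on their individual hyperbolic volumes, and they cannot in general be straightened to geometric truncated tetrahedra. Nothing forces each piece to have volume at most $V(\pi/n)$; only their \emph{sum} is $n\,V(\pi/n)$. Coarser inequalities that do hold (e.g.\ bounding the number of tetrahedra below by $\operatorname{vol}(M)$ divided by the supremal volume of a truncated tetrahedron) are not sharp here, because $V(\pi/n)$ is strictly smaller than that supremum. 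The paper's $H_2(\,\cdot\,;\mathbb{Z}_2)$ argument sidesteps all of this and gives the exact bound directly.
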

\begin{proof}
    (1) The truncated triangulation~$T^*$ of $M = M(G,\theta)$ dual to $P=P(G,\theta)$ consists of $n$ truncated tetrahedra (according to the number of true vertices of $P$) and has exactly three edges (according to the number of $2$-components of $P$) such that each edge of~$T^*$ is incident to exactly $2n$ dihedral angles of the tetrahedra. The last condition (since $n\ge 4$) allows one to realize topological truncated tetrahedra as congruent regular truncated hyperbolic tetrahedra, which gives $M$ a hyperbolic structure. A detailed description of this trick is given, e.\,g., in ~\cite{FMP03a}. To prove that $\partial M$ is connected (which also holds for $n\in\{2,3\}$) consider the graph~$\Gamma$ whose vertices are the components of $\partial M$ and whose edges correspond to the edges of~$T^*$. Analyzing the correspondence between the edges of a tetrahedra in~$T^*$ and the edges of~$\Gamma$ we get that $\Gamma$ is a wedge of three circles.   
    
    (2)-(3) The proof boils down to demonstrating that~$M$ has no special spine with less than $n$ (true) vertices. The sufficiency follows by (1), since every compact hyperbolic $3$\nobreakdash-manifold with totally geodesic boundary has a special spine whose number of vertices is equal to its Matveev complexity~\cite[Theorem 2.2.4]{Mat07}. Assume that $M$ has a special spine $Q$ with $n'<n$ vertices. The equality of the Euler characteristics $\chi(Q)=\chi(M)=\chi(P)$ implies that the number of $2$-components in~$Q$ is less than that in~$P$. From this we deduce that $Q$ contains at most one closed surface and $P$ contains three ones. This is impossible because $H_2(Q;\mathbb{Z}_2)=H_2(M;\mathbb{Z}_2)=H_2(P;\mathbb{Z}_2)$ while the number of closed surfaces contained in an arbitrary special polyhedron $S$ equals the number of nontrivial elements in the homology group $H_2(S;\mathbb{Z}_2)$.
\end{proof}

\begin{theorem}
    Let $G$ and $G'$ be $3$-Eulerian graphs with $n\ge 4$ and $n'\ge 4$ vertices and with framings $\theta$ and $\theta'$, respectively. 
    Then the manifolds $M(G,\theta)$ and $M(G',\theta')$ are homeomorphic if and only if the framed graphs $(G,\theta)$ and $(G',\theta')$ are isomorphic.
\end{theorem}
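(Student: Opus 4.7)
The ($\Leftarrow$) direction is immediate: any isomorphism $(G,\theta)\cong(G',\theta')$ of framed graphs extends to a cellular isomorphism $P(G,\theta)\cong P(G',\theta')$ of special spines, which in turn yields a homeomorphism of the corresponding regular neighborhoods, and hence of $M(G,\theta)$ and $M(G',\theta')$.

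For the ($\Rightarrow$) direction the plan is to reconstruct $(G,\theta)$ from $M=M(G,\theta)$ via a canonical geometric decomposition. By the proof of Theorem~1(1), the truncated triangulation $T^{*}=T^{*}(G,\theta)$ realizes $M$ as a union of $n$ isometric copies of the regular truncated hyperbolic tetrahedron with lateral dihedral angle $\pi/n$. I claim that this decomposition is precisely the Kojima canonical decomposition of $M$, defined intrinsically as the polyhedral decomposition dual to the cut locus of $\partial M$ and therefore preserved by every isometry of~$M$. Granting the claim, the standard rigidity statement for compact hyperbolic $3$-manifolds with totally geodesic boundary (obtained, e.\,g., by applying Mostow rigidity to the double) shows that every homeomorphism $M(G,\theta)\to M(G',\theta')$ is isotopic to an isometry; that isometry carries $T^{*}(G,\theta)$ to $T^{*}(G',\theta')$, and the induced isomorphism of the dual spines $P(G,\theta)\to P(G',\theta')$ matches the $1$-skeleta (recovering an isomorphism $G\to G'$) and the three $2$-cells (recovering $\theta$ and~$\theta'$, since those $2$-cells are attached along the Eulerian cycles of the framings).

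The main obstacle is the verification that $T^{*}$ coincides with the Kojima decomposition. For this, lift a single truncated tetrahedron $\widetilde\Delta$ to the universal cover $\widetilde M\subset\mathbb{H}^{3}$; its four triangular truncation faces lie in four hyperbolic planes belonging to the lift of $\partial M$. The full symmetry group of the regular truncated tetrahedron acts transitively on these four planes, so the centroid of $\widetilde\Delta$ is equidistant from them; a direct convexity computation (comparing distances from a point of $\widetilde\Delta$ to its own boundary planes with distances to any boundary plane reached across a hexagonal face) then shows that at every interior point of $\widetilde\Delta$ the nearest lift of $\partial M$ is one of these four planes. Consequently each $\widetilde\Delta$ is a Kojima cell, so $T^{*}$ is the Kojima decomposition of~$M$, as required. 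A similar approach is carried out in~\cite{FMP03a} for a closely related family of manifolds, and the argument above is designed to adapt readily to the present setting.
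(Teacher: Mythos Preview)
Your proof is correct and follows essentially the same line as the paper: Mostow rigidity for compact hyperbolic manifolds with totally geodesic boundary, combined with the identification of $P(G,\theta)$ with the cut locus of $\partial M$ (equivalently, of $T^{*}$ with the Kojima decomposition). The paper states this more tersely, simply asserting that in the construction with congruent regular truncated tetrahedra $P(G,\theta)$ coincides with the cut locus, whereas you spell out the symmetry and convexity argument that underlies this claim.
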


\begin{proof}
    The proof follows from assertion~(1) of Theorem~1 and the Mostow rigidity theorem because, in the above construction with congruent regular truncated hyperbolic tetrahedra, 
    the polyhedron $P(G,\theta)$ is homeomorphic to the cut locus of the corresponding hyperbolic manifold, and thus $P(G,\theta)$ is uniquely determined by the manifold.
\end{proof}

We denote the class of all manifolds of the form $M(G,\theta)$ with complexity $n$ by $\mathscr{M}_n$.

\begin{theorem}
For each sufficiently large~$n\in\mathbb{N}$ we have
 $$n! < |\mathscr{M}_n| < n!\,4^n.$$
\end{theorem}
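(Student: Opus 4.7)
By Theorem~2, $|\mathscr{M}_n|$ equals the number of isomorphism classes of framed $3$-Eulerian graphs $(G,\theta)$ on $n$ vertices. The strategy for both bounds is to count \emph{rooted} framed graphs — a root being a triple (cycle $\gamma\in\theta$, edge of $\gamma$, direction of traversal) — and relate this count to $|\mathscr{M}_n|$ through the automorphism group; each iso class of framed graphs carries $12n$ rootings. Writing $R_n$ for the number of rooted iso classes, one obtains the encoding bound $R_n\le (2n-1)!!\cdot 2^{n-1}=\tfrac12 n!\binom{2n}{n}$ by observing that each rooted framed graph is determined by (a) the chord diagram on $2n$ points obtained from traversing $\gamma$ and relabeling vertices $1,\dots,n$ in order of first visit, and (b) a binary label at each vertex indicating which of the two remaining pairings is $\gamma_2$, modulo the global $\gamma_2\leftrightarrow\gamma_3$ swap.

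For the upper bound, Mostow rigidity (as in the proof of Theorem~2) identifies $\mathrm{Aut}(G,\theta)$ with $\mathrm{Isom}(M(G,\theta))$. Since an isometry permutes the $n$ canonical truncated tetrahedra with stabilizer in the tetrahedral group $S_4$, we have $|\mathrm{Aut}(G,\theta)|\le 24n$, so each iso class accounts for at least $12n/(24n)=1/2$ rooted iso classes. Hence
$$|\mathscr{M}_n|\le 2R_n\le n!\binom{2n}{n}<\frac{n!\,4^n}{\sqrt{\pi n}}<n!\,4^n$$
by the standard inequality $\binom{2n}{n}<4^n/\sqrt{\pi n}$, valid for all $n\ge 1$.

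For the lower bound, the trivial estimate $|\mathrm{Aut}|\ge 1$ yields $|\mathscr{M}_n|\ge L_n/n!$, where $L_n$ is the number of \emph{labeled} framed graphs, so it suffices to show $L_n>(n!)^2$. The plan is to combine two ingredients: (i) standard asymptotics for random $4$-regular graphs via the configuration model give of order $(n!)^2(2/3)^n/n$ labeled $4$-regular simple graphs on $n$ vertices, and almost all of these are $3$-vertex-connected for large $n$; and (ii) Jaeger's theorem~\cite{J91} together with BEST-type transition-system counts show that the average number of framings per $3$-vertex-connected $4$-regular graph is of order $6^n/\mathrm{poly}(n)$ (each vertex contributes $3!=6$ orderings of its three pairings, and the Eulerian constraint that each of the three transition systems be a single cycle costs only polynomial factors on average). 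Multiplying gives $L_n\gtrsim(n!)^2\cdot 4^n/\mathrm{poly}(n)\gg(n!)^2$, whence $|\mathscr{M}_n|>n!$ for all sufficiently large $n$. The main technical obstacle lies in rigorously lower-bounding the average number of framings per $4$-regular graph: while Jaeger's theorem supplies only existence of one framing, producing the exponential lower bound requires a careful transition-system count along the lines of the constructions developed in~\cite{FMP03a, VesTurFom16, VesFom12, FomShum21}; an alternative is an explicit family of framings obtained by inserting vertices into a small seed framed graph, giving $>n!$ pairwise non-isomorphic framings directly.
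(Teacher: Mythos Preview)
Your upper bound is essentially the paper's argument in different clothing: your chord diagrams on $2n$ points are exactly the $(2n-1)!!=\frac{(2n)!}{2^n n!}$ connected $4$-regular graphs with a distinguished Eulerian path (vertices relabeled by first visit), and your $2^{n-1}$ binary labels match the paper's observation that a fixed Eulerian cycle lies in at most $2^{n-1}$ framings. The detour through $\mathrm{Isom}(M)$ to bound $|\mathrm{Aut}(G,\theta)|\le 24n$ is unnecessary: since an automorphism fixing a rooting (cycle, edge, direction) must fix every edge along that cycle and hence be the identity, the action on rootings is already free, giving $|\mathrm{Aut}(G,\theta)|\le 12n$ and $|\mathscr{M}_n|\le R_n$ directly. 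Either way the bound $|\mathscr{M}_n|\le\frac{(2n)!}{2\,n!}<n!\,4^n$ follows.

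The lower bound, however, is not a proof but a programme, and you say so yourself (``the main technical obstacle lies in rigorously lower-bounding the average number of framings''). The heuristic that a typical $3$-connected $4$-regular graph carries $\sim 6^n/\mathrm{poly}(n)$ framings is unsupported: the requirement that all three transition systems simultaneously be single Eulerian cycles is a highly nontrivial constraint, and no BEST-type result in the cited references delivers this. The paper closes this gap with a concrete combinatorial lemma you are missing: for every framing $\theta$ of any $3$-Eulerian graph and every vertex $v$, one can re-route the three cycles locally at $v$ to obtain a \emph{different} framing. Iterating over vertices, this yields at least $2^{n-1}$ framings of $G$. Combined with the Wormald--Bollob\'as asymptotics, which show that almost all simple $4$-regular graphs are simultaneously $4$-vertex-connected (hence $3$-Eulerian by Jackson) and asymmetric, one gets at least
\[
\frac{e^{-15/4}(4n)!}{96^n(2n)!\,n!}\cdot 2^{n-1}\ \sim\ n!\cdot\frac{(4/3)^n}{2\sqrt{2}\,\pi n\,e^{15/4}}
\]
pairwise non-isomorphic framed graphs, which exceeds $n!$ for large $n$. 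The local-move lemma is the substantive missing idea; neither your BEST-type sketch nor the ``insert vertices into a seed'' suggestion replaces it without further work.
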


\begin{proof}
    Since each Eulerian graph $G$ is uniquely determined by the sequence of vertices of any Eulerian path in~$G$, 
    it follows that 
    the number of connected $4$-regular $n$-vertex graphs with a distinguished Eulerian path is $\frac{(2n)!}{n!\,2^n}$.
    Since an Eulerian cycle occurs in at most~$2^{n-1}$ framings, we have 
    $$\lvert \mathscr{M}_n\rvert\le\frac{(2n)!}{n!\,2}<n!\,4^n.$$ 
    
    The results of \cite{War81} and \cite{Bol82} imply that both the number of 
    $4$-vertex-connected asymmetric (i.\,e., having no nontrivial automorphisms) 
    simple $4$-regular $n$-vertex unlabeled graphs and the number of all simple $4$-regular $n$\nobreakdash-vertex unlabeled graphs are asymptotic to 
    $$\frac{e^{-15/4} (4n)!}{(96)^n (2n)! n!}.$$
    All finite $3$-vertex-connected simple $4$\nobreakdash-regular graphs are $3$-Eulerian~\cite{J91}.
    Analysis of options shows that for each framing~$\theta$ of an arbitrary $3$\nobreakdash-Eulerian graph~$G$ and for each vertex~$v$ in~$G$ there is at least one way to change the cycles of~$\theta$ at~$v$ so as to obtain a framing of~$G$ that is distinct from~$\theta$.
    This implies that each asymmetric $3$-Eulerian $n$-vertex graph admits at least $2^{n-1}$ framings.
    Combining this with the Stirling formula we deduce that for any $C<1$ and for all sufficiently large $n$ we have 
    $$|\mathscr{M}_n|> n! C 4^n/(3^{n}2\pi n e^{\frac{15}{4}}\sqrt{2}).$$
\end{proof}

\end{document}